\newtheorem{theorem}{Theorem}[section]
\newtheorem{proposition}[theorem]{Proposition}
\newtheorem{lemma}[theorem]{Lemma}
\newtheorem{corollary}[theorem]{Corollary}
\newtheorem{claim}{Claim}[theorem]
\newtheorem{conjecture}[theorem]{Conjecture}
\newtheorem{observation}[theorem]{Observation}
\begin{document}

\title{Unions of 1-factors in $r$-graphs and overfull graphs}

\author{Ligang Jin\thanks{
	   Department of Mathematics,
       Zhejiang Normal University,
       Yingbin Road 688,
       321004 Jinhua,
       China;
       Grant Numbers: NSFC 11801522 and Qianjiang Talent Program of Zhejiang Province QJD1803023;    
       ligang.jin@zjnu.cn},
       Eckhard Steffen\thanks{
       	Paderborn Center for Advanced Studies and
				Institute for Mathematics,
       	Paderborn University,
       	Warburger Str. 100,
       	33098 Paderborn,
       	Germany;	es@upb.de}}

\maketitle

\begin{abstract}
{\small 
We prove lower bounds for the fraction of edges of an $r$-graph which can be covered by the union of 
$k$ 1-factors. The special case $r=3$ yields some known results for cubic graphs. Furthermore, we introduce the concept of
$k$-overfull-free $r$-graphs and achieve better bounds for these graphs.}  
\end{abstract}

\par\bigskip\noindent
\textbf{Keywords}: $r$-graphs; 1-factors; overfull graphs

\section{Introduction}
We consider finite graphs $G$ with vertex set $V(G)$ and edge set $E(G)$. Graphs do not contain loops in this paper. 
For $v,w \in V(G)$, the number of edges 
between $v$ and $w$ is denoted by $\mu(v,w)$ and $\mu(G) = \max \{\mu(v,w)\colon\ v,w \in V(G)$\}. $\mu(v,w)$ is called the multiplicity of $vw$ and $\mu(G)$ the multiplicity of $G$. A graph is simple if $\mu(v,w) \leq 1$ for any two vertices $v,w$.
The number of edges which are incident to vertex $v$ is the vertex degree of $v$ which is denoted by $d_G(v)$. 
The maximum vertex degree of $G$ is $\max \{d_G(v) : v \in V(G)\}$ and it is denoted by $\Delta(G)$. Further
$\delta(G)$ denotes the minimum degree of a vertex of $G$. 

\subsection{1-factor covering}
The following celebrated conjecture, often referred to as the Berge-Fulkerson conjecture, is due to Fulkerson and appears first in \cite{Fulkerson1971168}:
\begin{conjecture}[Berge-Fulkerson conjecture \cite{Fulkerson1971168}]
	Every bridgeless cubic graph $G$ has six 1-factors such that each edge of $G$ is contained in precisely two of them.	
\end{conjecture}
A set of such six 1-factors in the conjecture is called a Fulkerson cover of $G$.
It is straightforward that Berge-Fulkerson Conjecture implies the existence of five 1-factors whose union is the edge-set of the graph $G$.
This naturally raises a seemly weaker conjecture, attributed to Berge (unpublished, see e.g. \cite{CQZhang1997}).

\begin{conjecture}[Berge conjecture] \label{Berge_conj}
	Every bridgeless cubic graph $G$ has five 1-factors such that each edge of $G$ is contained in at least one of them.
\end{conjecture}

A set of the five 1-factors in Berge Conjecture is called a Berge cover of $G$.
Recently, Mazzuoccolo \cite{Mazzuoccolo2011} proved that the previous two conjectures are equivalent. 
It is unclear whether the same equivalence holds for every single bridgeless cubic graph, in other words, does a graph having a Berge cover always have a Fulkerson cover?

Let $r$ be a positive integer.
A graph $G$ is $r$-regular, if $d_G(v)=r$ for all $v \in V(G)$.
Let $X \subseteq V(G)$  be a set of vertices. The subgraph of $G$ induced by $X$ is denoted by $G[X]$, and the set of edges with precisely one end in $X$ by $\partial_G(X)$. 
An $r$-regular graph $G$ is an $r$-graph if $|\partial_G(X)| \geq r$ for every odd set $X \subseteq V(G)$.

A cubic graph is a 3-graph if and only if it is bridgeless. Moreover, it was proved in \cite{Seymour_1979} that every $r$-graph has a 1-factor.
Hence, it is natural to consider similar questions on perfect matching covering for $r$-graphs as for bridgeless cubic graphs. 
In particular, aforementioned two conjectures were generalized to $r$-graphs.
In 1979, Seymour \cite{Seymour_1979} proposed the generalized Berge-Fulkerson conjecture:

\begin{conjecture}[Generalized Berge-Fulkerson conjecture \cite{Seymour_1979}] \label{conj_GBF}
Every $r$-graph has $2r$ 1-factors such that each edge is contained in precisely two of them.
\end{conjecture}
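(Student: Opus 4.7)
The statement is the generalized Berge-Fulkerson conjecture, which specializes at $r=3$ to the classical (still open) Berge-Fulkerson conjecture, so a direct proof is out of reach. My plan is therefore to outline the natural attack, based on the equivalence announced in the abstract, and pinpoint where it breaks down.

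The first step is to establish that equivalence: Conjecture~\ref{conj_GBF} holds if and only if every $r$-graph can be covered by $2r-1$ 1-factors. The ``only if'' direction is immediate---from a double-cover $M_1,\ldots,M_{2r}$, discarding any single $M_i$ still covers every edge, since each edge belongs to exactly two of the $M_j$. The converse is the substantive half, and I would approach it via Edmonds' matching polytope theorem: for an $r$-graph $G$, the uniform vector $\mathbf{1}_{E(G)}/r$ lies in the convex hull of 1-factor indicators. Combining such a fractional decomposition with a given $(2r-1)$-cover and clearing denominators should produce non-negative integer coefficients $c_i$ on a list of 1-factors with $\sum c_i = 2r$ and $\sum c_i \chi_{M_i} \equiv 2$ on $E(G)$.

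With the equivalence in hand, the task reduces to producing a $(2r-1)$-cover. The natural strategy is induction on $r$: find a 1-factor $M$ of $G$ such that $G-M$ is an $(r-1)$-graph, apply the inductive hypothesis (together with the equivalence) to obtain a $(2r-3)$-cover of $E(G-M)$ by 1-factors of $G-M$ (which are also 1-factors of $G$), then append $M$ together with any additional 1-factor of $G$ to obtain a $(2r-1)$-cover of $E(G)$. The base case $r=3$ is the classical Berge-Fulkerson conjecture itself, so even this scheme only reduces the generalized conjecture to the cubic case (which is already unknown).

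The main obstacle---and what makes the conjecture so stubborn---is the existence of the 1-factor $M$ at each inductive step. One needs a 1-factor avoiding every non-trivial odd edge-cut of $G$ of size exactly $r$, and no structural theorem, even conjectural, guarantees this. The partial fractional bounds that the paper actually establishes presumably sidestep this obstacle rather than resolving it, which is precisely why the full conjecture remains open; the realistic goal of an attack is therefore only to refine such fractional lower bounds on the fraction of $E(G)$ coverable by $k$ 1-factors, as the abstract promises.
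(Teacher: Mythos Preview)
The statement you were asked to prove is Conjecture~\ref{conj_GBF}, the generalized Berge--Fulkerson conjecture. The paper does \emph{not} prove it; it is stated precisely as an open conjecture, and the paper's actual results are the lower bounds on $m(r,k)$ in Section~3. So there is no ``paper's own proof'' to compare against, and you are entirely right to say that a direct proof is out of reach.

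Your meta-analysis is accurate. The equivalence with the $(2r-1)$-cover statement (Conjecture~\ref{conj_excessive_index}) is exactly what the paper cites from Mazzuoccolo~\cite{Mazzuoccolo2011}, and your sketch of the ``only if'' direction is correct. Your inductive scheme and the obstacle you isolate---that one cannot in general find a 1-factor $M$ meeting every tight odd cut in exactly one edge, so that $G-M$ remains an $(r-1)$-graph---is indeed the well-known obstruction. One small remark: in your induction you only need $2r-2$ matchings (the $(2r-3)$-cover of $G-M$ plus $M$ itself) to cover $E(G)$; the extra matching is harmless but unnecessary for the cover, though of course the real failure is earlier, at the existence of $M$.

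In short: there is no gap to name, because you correctly refused to claim a proof of an open problem, and your discussion of the natural strategy and its breakdown is sound and consistent with what the paper says about the status of the conjecture.
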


Trivially, this conjecture implies the following generalized form of Conjecture \ref{Berge_conj}, 
first proposed by Mazzuoccolo \cite{Mazzuoccolo201373}.

\begin{conjecture}[Generalized Berge conjecture \cite{Mazzuoccolo201373}]\label{conj_excessive_index}
Every $r$-graph $G$ has $2r-1$ 1-factors such that each edge is contained in at least one of them.
\end{conjecture}

The value $2r-1$ in the conjecture is best possible, that is, it can not be smaller, as shown in \cite{Mazzuoccolo201373}. In the same paper, Mazzuoccolo proved the equivalence between the generalized Berge-Fulkerson conjecture and the generalized Berge conjecture, in a similar way as he did for cubic case.

The excessive index $\chi'_e(G)$ of a graph $G$ is the minimum number of 1-factors needed to cover $E(G)$. This parameter, also called the perfect matching index in \cite{FouquetVanherpe}, was widely studied in the literature, e.g., \cite{Bonisoli2007, Cariolaro, Mazzuoccolo201373, Mazzuoccolo2014264, MazzuoccoloYoung, Rajasingh}.
It is reasonable to consider the excessive index for $r$-graphs in the context that it can be arbitrary large for some family of bridgeless $r$-regular graphs, constructed in \cite{MazzuoccoloYoung}.
However, it is an open question whether there exists a constant $k$ such that $\chi'_e(G)\leq k$ for all $r$-graphs $G$ for any fixed $r\geq 3$. 
The result of Mazzuoccolo \cite{Mazzuoccolo201373} shows that if such $k$ exists then it is at least $2r-1$. The generalized Berge conjecture asserts that such $k$ exists and $k = 2r-1$.  

Partial covers of $r$-graphs with 1-factors are of great interest, see e.g.~\cite{Jin_Steffen_2017, Steffen_2015}.
In this paper, we consider the following relaxed form of the generalized Berge conjecture: Over all $r$-graphs $G$ for any 
fixed $r$, what is the maximum constant $c~(c\leq 1)$, such that $G$ has $2r-1$ 1-factors whose union contains at least $c|E(G)|$ edges? Note that the generalized Berge conjecture asserts that $c=1$. 
We will show that $c\geq 1-e^{-2}\approx 0.8647$. We will also show a second lower bound for $c$ which depends on $r$, 
but which is always greater than $1-e^{-2}$.
In fact, this second lower bound is an approximation to the following more general problem.

Given an $r$-graph $G$, let $\mathcal{M}$ be the set of distinct 1-factors in $G$.
Fix a positive integer $k$. Define
$$m(r,k,G)=\max_{M_1,\ldots,M_k\in \mathcal{M}} \frac{|\bigcup_{i=1}^k M_i|}{|E(G)|},$$
and
$$m(r,k)=\inf_G m(r,k,G),$$
where the infimum is taken over all $r$-graphs.
Clearly, $m(r,k)\leq m(r,k+1) \leq 1.$
With this notation, the generalized Berge conjecture can be reformulated as follows:
\begin{conjecture}\label{}
	$m(r,2r-1)= 1$ for every integer $r$ with $r\geq 3$.
\end{conjecture}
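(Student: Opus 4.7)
The plan is to derive $m(r,2r-1)=1$ directly from Conjecture~\ref{conj_GBF}, which we take as the starting hypothesis. Fix $r\ge 3$ and an arbitrary $r$-graph $G$. Conjecture~\ref{conj_GBF} furnishes 1-factors $M_1,\dots,M_{2r}$ of $G$ such that every edge lies in exactly two of them. I would then argue that any $2r-1$ of these already cover $E(G)$: delete any single $M_j$ and take an arbitrary edge $e\in E(G)$; since $e$ was contained in exactly two of the original $2r$ 1-factors, at least one of those two occurrences survives the deletion, so $e\in\bigcup_{i\ne j}M_i$. Hence $\bigcup_{i\ne j}M_i=E(G)$, and the family $(M_i)_{i\ne j}$ witnesses $m(r,2r-1,G)=1$.

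Having established $m(r,2r-1,G)=1$ for every $r$-graph $G$, I would combine this with the trivial upper bound $m(r,2r-1,G)\le 1$ from the definition to conclude that $m(r,2r-1,G)=1$ uniformly in $G$. Taking the infimum over all $r$-graphs then yields $m(r,2r-1)=1$, which is the claim. Note that this last step relies only on the elementary fact that an infimum of values bounded above by $1$ equals $1$ precisely when every value equals $1$, so no extra uniformity across graphs is needed.

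The main obstacle is that Conjecture~\ref{conj_GBF} is itself open, so the derivation above is only conditional. Any unconditional route would have to construct, for an arbitrary $r$-graph, either the full Berge-Fulkerson family of $2r$ 1-factors or a cover by $2r-1$ 1-factors; by Mazzuoccolo's theorem these two tasks are equivalent, and both are as hard as the generalized Berge-Fulkerson conjecture. Since even the qualitative question of whether some absolute constant $k$ bounds $\chi'_e(G)$ for all $r$-graphs is open, producing the sharp value $2r-1$ directly appears out of reach without a substantial new input on the Berge-Fulkerson side; this is why the statement is recorded as a conjecture rather than a theorem, with the quantitative lower bounds on $m(r,k)$ in the remainder of the paper serving as partial evidence towards it.
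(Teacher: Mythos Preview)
Your derivation is correct and matches the paper's treatment: the statement is presented in the paper not as a theorem with a proof but as a \emph{reformulation} of Conjecture~\ref{conj_excessive_index}, i.e., as an open conjecture equivalent to the generalized Berge--Fulkerson conjecture. Your conditional argument (take the $2r$ Berge--Fulkerson 1-factors, drop one, observe that every edge still lies in at least one of the remaining $2r-1$) is exactly the easy direction of that reformulation, and you rightly flag that the unconditional statement is open.

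The only minor difference is the starting point: the paper phrases the reformulation in terms of Conjecture~\ref{conj_excessive_index} (the bound $\chi'_e(G)\le 2r-1$), whereas you start from Conjecture~\ref{conj_GBF} directly. Since Mazzuoccolo's theorem makes these two conjectures equivalent, the two routes are interchangeable; your version has the small advantage of being self-contained (it does not need the nontrivial direction of Mazzuoccolo's equivalence), while the paper's version makes the link $\chi'_e(G)\le 2r-1 \Leftrightarrow m(r,2r-1,G)=1$ explicit in both directions.
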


The parameter $m(r,k)$ has primarily been studied in cubic case, i.e.~$r=3$.
Berge's conjecture states that $m(3,5)=1$.
Kaiser, Kr\'{a}l and Norine \cite{Kaiser2006} proposed a lower bound for $m(3,k)$ as \begin{equation}\label{LBoundCubic}
m(3,k)\geq 1-\prod_{i=1}^k \frac{i+1}{2i+1},
\end{equation}
and verified it for the case $k\in \{2,3\}$. Meanwhile,
Patel \cite{Patel} conjectured that $m(3,2)=\frac{3}{5}$, $m(3,3)=\frac{4}{5}$ and
$m(3,4)=\frac{14}{15}$. 
Since the example of Petersen graph, the result of Kaiser, Kr\'{a}l and Norine confirms that $m(3,2)=\frac{3}{5}$. But the exact values for $m(3,3)$ and $m(3,4)$ are still unknown. 
A complete proof for the lower bound in (\ref{LBoundCubic}) was later given by Mazzuoccolo \cite{Mazzuoccolo2013313}.

In Section \ref{sec:covering}, we obtain the following lower bound for $m(r,k)$:
\begin{equation}\label{BoundEven}
m(r,k)\geq 1-\prod_{i=1}^k \frac{(r^2-3r+1)i-(r^2-5r+3)}{(r^2-2r-1)i-(r^2-4r-1)}
\end{equation} 
for any even $r\geq 4$ and any $k\geq 1$, and
\begin{equation}\label{BoundOdd}
m(r,k)\geq 1-\prod_{i=1}^k \frac{(r^2-2r-1)i-(r^2-4r+1)}{(r^2-r-2)i-(r^2-3r-2)}
\end{equation} for any odd $r\geq3$ and any $k\geq 1$.

For instance of small $r$ and $k$, the values of this lower bound are listed in Table \ref{table}.
\begin{table}[hh] \label{table}
	\centering
	\begin{tabular}{|c|l|l|l|l|}
		\hline
		& $r=3$ & $r=4$ & $r=5$\\
		\hline
		$m(r,2)\geq$ & $0.6\geq 0.5556$ & $0.45 \geq 0.4375$ & $ 0.3714 \geq 0.36$ \\
		$m(r,3)\geq$ & $ 0.7714\geq 0.7037$ & $0.6 \geq 0.5781$ & $ 0.5081 \geq 0.488$  \\
		$m(r,4)\geq$ & $0.873 \geq 0.8025$ & $0.7103 \geq 0.6836$ & $ 0.6157 \geq 0.5904$ \\
		$m(r,5)\geq$ & ${\bf 0.9307\geq 0.8683}$ &  $0.7908\geq 0.7627$ & $ 0.7 \geq 0.6723$ \\
		$m(r,6)\geq$ & $0.9627  \geq 0.9122$ & $ 0.8492 \geq 0.822$ & $ 0.766 \geq 0.7379$ \\
		$m(r,7)\geq$ & $0.9801 \geq 0.9415$ & ${\bf 0.8914 \geq 0.8665}$ & $0.8176 \geq 0.7903$ \\
		$m(r,8)\geq$ & $0.9895 \geq 0.961$ & $0.9219 \geq 0.8999$  & $0.8578 \geq 0.8322$ \\
		$m(r,9)\geq$ & $0.9945 \geq 0.974$ & $0.9439 \geq 0.9249$  & ${\bf 0.8892 \geq 0.8658}$ \\
		\hline
	\end{tabular}
	\caption{Approximate values of the two lower bounds for $m(r,k)$ presented in formulations (\ref{BoundEven}) and (\ref{BoundOdd}) and in Theorem 3.1, shown respectively in the left and the right sides of the inequality in the table. 
		In particular, the one for $m(r,2r-1)$ is presented in bold.}
\end{table}

In particular, if we take $r=3$, this lower bound coincides with the established bound in (\ref{LBoundCubic}); if we take $k=2r-1$, it gives a partial result to the generalized Berge conjecture, and the approximate value of $m(r,2r-1)$ is shown in bold in Table \ref{table}.

Now we are going to show that the lower bounds in (\ref{BoundEven}) and (\ref{BoundOdd}) is always better than $1-e^{-2}$.
Let $f(x)$ be a function defined by 
$$f(x)=\frac{(r^2-3r+1)x-(r^2-5r+3)}{(r^2-2r-1)x-(r^2-4r-1)}.$$
We can calculate that $f(1)=\frac{r-1}{r}$ and the derivative $$f'(x)=\frac{-2}{[(r^2-2r-1)x-(r^2-4r-1)]^2}<0.$$
Hence, $f(i)\leq \frac{r-1}{r}$ for each $i\geq 1$.
So when we take $k=2r-1$, the lower bound in (\ref{BoundEven}) is greater than
$$1-(\frac{r-1}{r})^{2r-1}>1-e^{-2},$$
where the last inequality is given by Corollary \ref{cor_ConstantBound}. 
Similarly, when $k=2r-1$, we can deduce that the lower bound in (\ref{BoundOdd}) is greater than $1-e^{-2}$ as well.

\subsection{Edge-colorings and overfull graphs}

A graph $G$ is $k$-overfull if $|V(G)|$ is odd, $\Delta(G) \leq k$ and $\frac{|E(G)|}{\lfloor \frac{1}{2}|V(G)| \rfloor} > k$.
It is easy to see that $G$ is $k$-overfull if and only if $2|E(G)| > k(|V(G)|-1)$. Furthermore, the 
$k$-deficiency of $G$ is $k|V(G)| - 2|E(G)|$, and it is denoted by $s_k(G)$.  

A $k$-edge-coloring of $G$ is a mapping $c : E(G) \rightarrow \{1, \dots, k\}$ such that adjacent edges are colored differently.
The chromatic index $\chi'(G)$ is the minimum number $k$ such that $G$ has a $k$-edge-coloring. Vizing \cite{Vizing_1964} 
proved that $\Delta(G)\leq \chi'(G)\leq \Delta(G) + \mu(G)$, in particular if $G$ is simple,
then $\chi'(G) \in \{\Delta(G), \Delta(G) + 1\}$. We say that $G$ is class 1 if $\chi'(G) = \Delta(G)$, and it is class 2 otherwise. 

Clearly, $\Delta(G)$ is a lower bound for the chromatic index of $G$. Overfull graphs are class 2 graphs for the trivial reason
that they contain too many edges. In general we have that 
$\chi'(G) \geq \max_{H \subseteq G} \lceil \frac{|E(H)|}{\lfloor \frac{1}{2}|V(H)| \rfloor}\rceil$.

A graph $G$ is critical with respect to $\chi'(G)$, if $\chi'(G-e) < \chi'(G)$ for every $e \in E(G)$. For simple graphs we have the definition
of a $k$-critical graph which says that a critical graph $H$ is $k$-critical, if $\Delta(H) = k$ and $\chi'(H)=k+1$.  
Vizing \cite{Vizing_1965} proved the classical result that a simple class 2 graph with maximum degree $k$ contains a $t$-critical subgraph 
for every $t \in \{2, \dots, k\}$. These results are the motivation for the result of Section \ref{Overfull graphs}, which proves that a $k$-overfull graph contains a $t$-overfull subgraph for every $t \in \{2, \dots, k\}$.

A graph $G$ is $k$-overfull-free, if it does not contain a $k$-overfull subgraph. Clearly, there are no 1-critical graphs and the 
2-critical graphs are the odd circuits which are also the connected 2-overfull graphs. Hence we have: A graph is 2-overfull-free
if and only if it is bipartite. We study $k$-overfull-free graphs in Section \ref{Overfull graphs}.
If an $r$-regular graph $G$ is class 1, then surely $G$ is an $r$-graph and $G$ is $r$-overfull-free. For $i \in \{1,2\}$ let
$G_i$ be an $r_i$-regular graph. We say that an $r$-graph $G$ is decomposable into $G_1$ and $G_2$ if $r=r_1 + r_2$, $V(G_i)=V(G)$
and $E(G) = E(G_1) \cup E(G_2)$. 
We will characterize some decomposable $r$-graphs in terms of excluded overfull subgraphs.  

\section{The perfect matching polytope}
Let $G$ be a graph and $w$ be a vector of $\mathbb{R}^{E(G)}$.
The entry of $w$ corresponding to an edge $e$ is denoted by $w(e)$, and for $A\subseteq E$, we define $w(A)=\sum_{e\in A}w(e)$.
The vector $w$ is a \emph{fractional 1-factor}  if it satisfies
\begin{enumerate}[(i)]
  \item $0\leq w(e)\leq 1$ for every $e\in E(G)$, and
  \item $w(\partial (\{v\}))=1$ for every $v\in V(G)$, and
  \item $w(\partial (S))\geq1$ for every $S\subseteq V(G)$ with odd cardinality.
\end{enumerate}

Let $F(G)$ denote the set of all fractional 1-factors of a graph $G$.
If $M$ is a 1-factor, then its characteristic vector $\chi^M$ is contained in $F(G)$. Furthermore, if $w_1,\ldots,w_n\in F(G)$, then any convex
combination $\sum_{i=1}^n\alpha_iw_i$ (where $\alpha_1,\ldots,\alpha_n$ are nonnegative real numbers summing up to 1) also belongs to $F(G)$.
It follows that $F(G)$ contains the
convex hull of all the vectors $\chi^M$ where $M$ is a 1-factor of $G$.
The following theorem by Edmonds asserts that the converse inclusion also
holds:
\begin{theorem}[Perfect Matching Polytope Theorem \cite{Edmonds1965}]\label{thm_Edmonds}
For any graph $G$, the set $F(G)$ coincides with the convex hull of the characteristic vectors of all 1-factors of $G$.
\end{theorem}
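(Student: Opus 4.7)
The plan is to establish the nontrivial containment, namely that every $w\in F(G)$ lies in the convex hull of the vectors $\chi^M$. Since $F(G)$ is a polytope cut out by finitely many linear inequalities, it suffices to show this for each vertex of $F(G)$; convex combinations then extend the conclusion to arbitrary $w$. I would argue by induction on $|V(G)|+|E(G)|$, with the base cases (the empty graph and $K_2$) being trivial.

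For the inductive step, fix a vertex $w$ of $F(G)$. First suppose some coordinate is integral. If $w(e)=0$ for an edge $e$, then $w$ (with $e$ dropped) is a vertex of $F(G-e)$, and induction yields $w=\chi^M$ for a 1-factor $M$ of $G-e$, which is also a 1-factor of $G$. If $w(e)=1$ for $e=uv$, constraints (ii) at $u$ and $v$ force $w$ to vanish on every other edge incident to $u$ or $v$; then the restriction of $w$ to $G':=G-\{u,v\}$ is a vertex of $F(G')$, and by induction equals $\chi^{M'}$ for a 1-factor $M'$ of $G'$, giving $w=\chi^{M'\cup\{e\}}$.

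The essential case is that $0<w(e)<1$ for every edge $e$; here the goal is a contradiction. Because $w$ is a vertex of a polytope in $\mathbb{R}^{E(G)}$, some $|E(G)|$ of the defining inequalities are tight at $w$ with linearly independent normals. By hypothesis none of the bound inequalities in (i) is tight, so every tight constraint has the form $w(\partial S)=1$ for odd $S$. Since the $|V(G)|$ vectors $\chi^{\partial v}$ span a subspace of $\mathbb{R}^{E(G)}$ of dimension strictly less than $|E(G)|$ (as soon as $G$ has enough edges), there must exist a tight odd set $S$ with $3\leq |S|\leq |V(G)|-3$. Form $G_1$ by contracting $V(G)\setminus S$ to a single vertex $s_1$, and $G_2$ by contracting $S$ to $s_2$; let $w_1,w_2$ be the vectors induced by $w$ on $E(G_1),E(G_2)$. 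The identity $w(\partial S)=1$ is exactly the degree condition at the contracted vertex, and any odd-set constraint in $G_i$ pulls back to an odd-set constraint in $G$, so $w_i\in F(G_i)$. By induction each $w_i$ is a convex combination $\sum_j\lambda_{ij}\chi^{M_{ij}}$ of characteristic vectors of 1-factors of $G_i$.

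The final step, which is the main obstacle, is the pasting: one must combine 1-factors of $G_1$ and $G_2$ into 1-factors of $G$ so that the reassembly exactly reproduces $w$. The key observation is that for each odd $T\subseteq\partial S$, the total coefficient mass in $w_1$ carried by 1-factors with $M_{1j}\cap\partial S=T$ equals the corresponding mass in $w_2$, because both evaluate to the contribution of $w$ to $T$ relative to the cut constraint. After refining both convex combinations (using Carath\'eodory, and splitting terms as needed) so that every listed 1-factor meets $\partial S$ in a fixed set $T$, pair each $M_{1j}$ with an $M_{2k}$ sharing the same intersection $T$; the union $M_{1j}\cup M_{2k}$ is a 1-factor of $G$, and assembling these with the appropriate product coefficients yields a convex decomposition of $w$ into characteristic vectors of 1-factors of $G$. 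This contradicts the assumption that $w$ is a vertex not of this form, completing the induction.
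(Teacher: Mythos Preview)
The paper does not prove this theorem at all; it is quoted as Edmonds' perfect matching polytope theorem with a citation to \cite{Edmonds1965}, and is used as a black box. There is therefore nothing in the paper to compare your argument against.

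Your outline follows the standard contraction-at-a-tight-odd-cut proof and is in essence correct, but two places need repair. First, the dimension count is not finished: saying the vectors $\chi^{\partial v}$ span a proper subspace ``as soon as $G$ has enough edges'' is not a proof. You must argue that a fully fractional vertex forces minimum degree at least $2$, hence $|E|\ge|V|$; dispose of the equality case (disjoint even cycles) directly; and note that for $|V|\le 4$ every odd set or its complement is a singleton, so no genuinely new tight constraint can appear and a separate check is required. Without these cases handled, the existence of a tight odd set $S$ with $3\le|S|\le|V|-3$ is not established.

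Second, and more concretely, the pasting step is misstated. After contracting $V\setminus S$ to a single vertex $s_1$, every $1$-factor $M_{1j}$ of $G_1$ meets $\partial S$ in \emph{exactly one} edge (the one covering $s_1$), not in a general odd subset $T\subseteq\partial S$. The correct bookkeeping is edge-by-edge: for each $e\in\partial S$ one has $\sum_{j:\,e\in M_{1j}}\lambda_{1j}=w(e)=\sum_{k:\,e\in M_{2k}}\lambda_{2k}$, and one couples the two sides along $e$ (for instance with weights $\lambda_{1j}\lambda_{2k}/w(e)$), gluing $M_{1j}$ and $M_{2k}$ through their common cut edge to obtain a $1$-factor of $G$. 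With that correction the reconstruction reproduces $w$, and since $w$ is fractional while every $\chi^M$ is $0$--$1$, the combination is nontrivial, contradicting extremality of $w$.
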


Towards the generalized Berge-Fulkerson conjecture, Seymour \cite{Seymour_1979} gave an alternative proof of the following theorem, which is a corollary of Edmonds's matching polytope theorem (see \cite{Seymour_1979} for the details between these two theorems).
\begin{theorem}[\cite{Seymour_1979}]\label{thm_multicoloring}
	For any $r$-graph $G$, there is a positive integer $p$ such that $G$ has $rp$ 1-factors and each edge is contained in precisely $p$ of them.
\end{theorem}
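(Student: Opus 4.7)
The plan is to apply the Perfect Matching Polytope Theorem to the uniform weight vector on $E(G)$. First I would define $w \in \mathbb{R}^{E(G)}$ by $w(e) = 1/r$ for every edge $e$, and verify that $w \in F(G)$: (i) clearly $0 \leq 1/r \leq 1$; (ii) since $G$ is $r$-regular, $w(\partial\{v\}) = |\partial\{v\}|/r = 1$ for every $v$; (iii) for every $S \subseteq V(G)$ of odd cardinality, the $r$-graph condition gives $|\partial S| \geq r$, so $w(\partial S) \geq 1$.

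Next I would invoke the Perfect Matching Polytope Theorem to write $w$ as a convex combination of characteristic vectors of 1-factors, say $w = \sum_{i=1}^n \alpha_i \chi^{M_i}$ with $\alpha_i \geq 0$ and $\sum_i \alpha_i = 1$. The coordinates of $w$ and of every $\chi^{M_i}$ are rational (in fact, integers for the latter), so the set of coefficient vectors $(\alpha_1,\ldots,\alpha_n)$ realising $w$ as such a convex combination is a nonempty polytope cut out by equations and inequalities with rational coefficients; in particular, it has a vertex with rational coordinates. Hence I may assume each $\alpha_i$ is rational. Clearing denominators, write $\alpha_i = k_i/N$ for nonnegative integers $k_i$ summing to $N$.

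Form the multiset of 1-factors in which $M_i$ appears with multiplicity $k_i$. Its size is $\sum_i k_i = N$, and for each edge $e$ the number of 1-factors in the multiset containing $e$ is
\[
\sum_{i=1}^n k_i \, \chi^{M_i}(e) \;=\; N \cdot w(e) \;=\; \frac{N}{r}.
\]
Since the left-hand side is an integer, $r$ divides $N$. Setting $p = N/r$, the multiset consists of $N = rp$ 1-factors, each edge of $G$ being contained in exactly $p$ of them, as required.

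The only nontrivial step is the rationality selection in the second paragraph; this is a standard fact about rational polytopes (a nonempty polytope defined by rational linear constraints has a rational vertex), so no serious obstacle arises. Everything else is a direct verification that $w$ is a fractional 1-factor together with a clearing-of-denominators argument.
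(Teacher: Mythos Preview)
Your proposal is correct and follows exactly the route the paper indicates: the paper does not spell out a proof but simply remarks that the theorem ``is a corollary of Edmonds's matching polytope theorem,'' and your argument is precisely the standard derivation from that theorem via the uniform vector $w \equiv 1/r$. The rationality step you flag is indeed routine (the feasible coefficient vectors form a nonempty rational polyhedron, hence contain a rational point), so there is no gap.
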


We will use this theorem to deduce our first lower bound in the next section. Moreover, the following property on fractional 1-factors will play a crucial role in the proof for our second lower bound.
\begin{lemma} [\cite{Kaiser2006}] \label{lem_FPMtoPM}
Let $w$ be a fractional 1-factor of a graph $G$ and $c\in \mathbb{R}^{E(G)}$.
Then $G$ has a 1-factor $M$ such that $c\cdot \chi^M \geq c\cdot w$, where $\cdot$ denotes the scalar product,
and $|M\cap C|=1$ for each edge-cut $C$ of odd cardinality and with $w(C)=1$.
\end{lemma}
The proof of this lemma was given in \cite{Kaiser2006}, where Theorem \ref{thm_Edmonds} is the main tool for the proof.

\section{Lower bounds for $m(r,k)$}\label{sec:covering}
We are going to deduce a lower bound for the parameter $m(r,k)$ by using Theorem \ref{thm_multicoloring} only.
\begin{theorem}\label{thm_trivial low bound}
$m(r,k)\geq 1-(\frac{r-1}{r})^k$ for every positive integers $r$ and $k$ with $r\geq 3$.
\end{theorem}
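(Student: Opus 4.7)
The plan is to prove this by a greedy averaging argument that repeatedly applies Theorem \ref{thm_multicoloring} to pick 1-factors covering a large fraction of the edges not yet covered.

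First I would record the basic fact that, in an $r$-graph $G$, every 1-factor has exactly $|V(G)|/2 = |E(G)|/r$ edges, so a single 1-factor always covers the fraction $1/r$ of $E(G)$. The same calculation will be applied relatively to any subset of edges. Concretely, by Theorem \ref{thm_multicoloring}, there exist $rp$ 1-factors $N_1,\ldots,N_{rp}$ (with repetition) such that every edge of $G$ lies in exactly $p$ of them. Given any subset $U\subseteq E(G)$, double counting yields
\begin{equation*}
\sum_{j=1}^{rp}|N_j\setminus U| \;=\; p\,|E(G)\setminus U|,
\end{equation*}
so by averaging, some $N_j$ satisfies $|N_j\setminus U|\geq |E(G)\setminus U|/r$.

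Then I would build the $k$ 1-factors inductively. Set $U_0=\emptyset$, and suppose $M_1,\ldots,M_i$ have been chosen with union $U_i$. Applying the averaging statement above with $U=U_i$, we obtain a 1-factor $M_{i+1}$ of $G$ with $|M_{i+1}\setminus U_i|\geq |E(G)\setminus U_i|/r$. Letting $U_{i+1}=U_i\cup M_{i+1}$, this gives
\begin{equation*}
|E(G)\setminus U_{i+1}| \;\leq\; \Bigl(1-\tfrac{1}{r}\Bigr)\,|E(G)\setminus U_i| \;=\; \tfrac{r-1}{r}\,|E(G)\setminus U_i|.
\end{equation*}
Iterating from $i=0$ to $i=k-1$ yields $|E(G)\setminus U_k|\leq \bigl(\tfrac{r-1}{r}\bigr)^k|E(G)|$, which is exactly the claimed bound $|U_k|/|E(G)|\geq 1-\bigl(\tfrac{r-1}{r}\bigr)^k$. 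Taking the infimum over $r$-graphs $G$ then gives $m(r,k)\geq 1-\bigl(\tfrac{r-1}{r}\bigr)^k$.

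There is no real obstacle here; the only subtlety is making sure the multicoloring promised by Theorem \ref{thm_multicoloring} can be invoked at each step for the \emph{same} graph $G$ (it can, since the theorem produces a single list of $rp$ 1-factors of $G$ that we reuse for every choice of $U_i$). No matching polytope machinery beyond Theorem \ref{thm_multicoloring} is needed, and Lemma \ref{lem_FPMtoPM} is not used for this (weaker) bound.
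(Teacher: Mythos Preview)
Your proposal is correct and follows essentially the same approach as the paper: both invoke Theorem~\ref{thm_multicoloring} to obtain $rp$ 1-factors covering each edge exactly $p$ times, use averaging to find at each step a 1-factor meeting at least a $1/r$ fraction of the currently uncovered edges, and iterate (you phrase it as a direct recursion on $|E\setminus U_i|$, the paper as induction on $k$, but the argument is the same).
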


\begin{proof}
(induction on $k$.) Since every $r$-graph has a 1-factor, which covers a fraction $\frac{1}{r}$ of the edges, the proof is trivial for $k=1$.
We proceed to the induction step.
Let $G$ be any $r$-graph and $E=E(G)$.
By the induction hypothesis, $G$ has $k-1$ many 1-factors $M_1,\ldots,M_{k-1}$ such that
\begin{equation}\label{eqn_1st bound 1}
\frac{|\bigcup_{i=1}^{k-1}M_i|}{|E|}\geq 1-(\frac{r-1}{r})^{k-1}.
\end{equation}
Moreover, by Theorem \ref{thm_multicoloring}, there exists a positive integer $p$ such that $G$ has $rp$ 1-factors $F_1,\ldots,F_{rp}$ and each edge is contained in precisely $p$ of them.
It follows that for every $X\subseteq E$, graph $G$ has a 1-factor $F$ among $F_1,\ldots,F_{rp}$ such that $|F\cap X|\geq \frac{|X|}{r}$.
In particular, let $X=E\setminus \bigcup_{i=1}^{k-1}M_i$ and consequently, take $M_k=F$. Thus,
\begin{equation}\label{eqn_1st bound 2}
|M_k\cap (E\setminus \bigcup_{i=1}^{k-1}M_i)|\geq \frac{|E\setminus \bigcup_{i=1}^{k-1}M_i|}{r},
\end{equation}
that is,
\begin{equation}\label{eqn_1st bound 3}
\frac{|\bigcup_{i=1}^{k}M_i|-|\bigcup_{i=1}^{k-1}M_i|}{|E|} \geq \frac{1}{r}(1-\frac{|\bigcup_{i=1}^{k-1}M_i|}{|E|}).
\end{equation}
It follows that
\begin{equation}\label{eqn_1st bound 4}
	\frac{|\bigcup_{i=1}^{k}M_i|}{|E|}\geq (1-\frac{1}{r})\frac{|\bigcup_{i=1}^{k-1}M_i|}{|E|}+\frac{1}{r}\geq 1-(\frac{r-1}{r})^{k}
\end{equation}
where the last inequality follows by using the inequality (\ref{eqn_1st bound 1}).
Therefore, $m(r,k,G)\geq 1-(\frac{r-1}{r})^k$ and by the choice of $G$, we have $m(r,k)\geq 1-(\frac{r-1}{r})^k$.
\end{proof}

In particular, if we take $k=2r-1$, we can further deduce from this theorem a constant lower bound for $m(r,2r-1)$.
\begin{corollary} \label{cor_ConstantBound}
	For every integer $r\geq 3$, we have $m(r,2r-1)\geq 1-e^{-2} \approx 0.8647.$
\end{corollary}
\begin{proof}
	Let $f(r)$ denote the function $1-(\frac{r-1}{r})^{2r-1}$. It is easy to see that $f(r)$ is strictly monotonic decreasing with respect to $r$. Moreover, $\lim\limits_{r\to +\infty}f(r)=1-e^{-2}.$ It follows with Theorem \ref{thm_trivial low bound} that $m(r,2r-1)\geq f(r)\geq 1-e^{-2}$.
\end{proof}

We now prove the following theorem, which will be used to deduce a second lower bound for $m(r,k)$.
An $i$-cut of a graph $G$ is an edge cut of $G$ of cardinality $i$.
The proof of the theorem is conducted by induction.
In the induction step, we apply Lemma \ref{lem_FPMtoPM} to a well-chosen fractional 1-factor, whose existence can be guaranteed by both inclusions of the induction hypothesis, one on the union of 1-factors and the other on $i$-cuts. 
The resulting 1-factor with its properties described in Lemma \ref{lem_FPMtoPM} and the 1-factors given by the induction hypothesis together complete the proof.
\begin{theorem}\label{thm:main1}
Let $G$ be an $r$-graph, and $V=V(G)$ and $E=E(G)$.
\begin{enumerate}[(a)]
  \itemsep=0cm
  \item If $r$ is even and $r\geq 4$, then for any positive integer $k$, graph $G$ has $k$ 1-factors $M_1,\ldots,M_k$ such that
      $$\frac{|\bigcup_{i=1}^k M_i|}{|E|}\geq 1-\prod_{i=1}^k \frac{(r^2-3r+1)i-(r^2-5r+3)}{(r^2-2r-1)i-(r^2-4r-1)}$$ and $\sum_{i=1}^k\chi^{M_i}(C)\leq (r-1)k+2$ for each $(r+1)$-cut $C$.

  \item If $r$ is odd and $r\geq 3$, then for any positive integer $k$, graph $G$ has $k$ 1-factors $M_1,\ldots,M_k$ such that
      $$\frac{|\bigcup_{i=1}^k M_i|}{|E|}\geq 1-\prod_{i=1}^k \frac{(r^2-2r-1)i-(r^2-4r+1)}{(r^2-r-2)i-(r^2-3r-2)},$$
      $\sum_{i=1}^k\chi^{M_i}(C)=k$ for each $r$-cut $C$ and $\sum_{i=1}^k\chi^{M_i}(D)\leq rk+2$ for each $(r+2)$-cut $D$.
\end{enumerate}
\end{theorem}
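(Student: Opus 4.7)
The plan is to induct on $k$, at each step constructing the new 1-factor $M_k$ by applying Lemma \ref{lem_FPMtoPM} to a cleverly chosen fractional 1-factor. For the base case $k=1$, applying Lemma \ref{lem_FPMtoPM} to the uniform fractional 1-factor $w_0 \equiv 1/r$ with $c = \chi^E$ gives a 1-factor $M_1$ with $|M_1|/|E| = 1/r$ (matching the formula at $i=1$) and, in the odd-$r$ case, with $|M_1 \cap C| = 1$ on every $r$-cut, since $w_0$ is tight on these odd-cardinality cuts and the lemma's tight-cut clause applies.

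For the inductive step, write $U = E \setminus \bigcup_{i=1}^{k-1} M_i$ and $t(e) = |\{i : e \in M_i\}|$, and try the candidate weight $w(e) = a_0 + a_1\, t(e)$. Since $\sum_{e \ni v} t(e) = k-1$ at every vertex, the vertex condition reduces to $r a_0 + (k-1) a_1 = 1$, and I would take $a_1$ as negative as possible (to push mass onto $U$) subject to the odd-cut constraints $w(\partial S) \geq 1$. In the odd-$r$ case, the binding inequality is a $(r+2)$-cut $D$ saturating the inductive cap $\sum_{e \in D} t(e) \leq r(k-1)+2$; requiring $w(D) = 1$ there yields
\[
a_1 = \frac{-2}{(r^2-r-2)(k-1) + 2r}, \qquad a_0 = \frac{(r-1)(k-1) + 2}{(r^2-r-2)(k-1) + 2r},
\]
and a routine algebraic check shows that $1 - a_0$ is exactly the $k$-th factor in the product defining the stated bound. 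The even-$r$ case is handled analogously, with the $(r+1)$-cut condition and its inductive cap $(r-1)(k-1)+2$ playing the corresponding role. Applying Lemma \ref{lem_FPMtoPM} with $c = \chi^U$ then produces $M_k$ with $|M_k \cap U| \geq w(U) = a_0|U|$. The tight-cut clause propagates the cut conditions: every $r$-cut is tight for $w$ by the vertex equation, forcing $|M_k \cap C| = 1$; and a $(r+2)$-cut $D$ can breach the new cap $\sum_{i=1}^k |M_i \cap D| \leq rk+2$ only if it was already saturated at step $k-1$, in which case $w(D) = 1$ and the lemma forces $|M_k \cap D| = 1$, just barely preserving the cap.

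The hard part will be checking that $w$ is actually a fractional 1-factor --- i.e., $w(\partial S) \geq 1$ on \emph{every} odd cut, not only on those governed by the inductive hypothesis. For $r \geq 5$, I expect a direct calculation: on any odd cut $\partial S$ with $|\partial S| \geq r + 4$, the trivial estimate $\sum_{e \in \partial S} t(e) \leq (k-1)|\partial S|$ together with the identity $a_0 + (k-1)a_1 = [(k-1)(r-3)+2]/A > 0$ (where $A$ is the denominator above) suffices to give $w(\partial S) \geq 1$. For $r = 3$, however, this trivial bound ceases to be strong enough on $(r+4)$-cuts once $k \geq 4$, and one has to carry an implicit additional invariant of the induction --- a bound of the form $\sum_{i=1}^k |M_i \cap C| \leq (\textrm{explicit function of } |C| \textrm{ and } k)$ for each larger odd cut $C$ --- whose preservation uses the parity of $|M_i \cap C|$ on odd cuts together with, once more, the tight-cut clause of Lemma \ref{lem_FPMtoPM}. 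Making this extra bookkeeping close cleanly in the recursion, and verifying the parallel story in the even-$r$ case (where the nominal $(r+1)$-cut condition functions partly as an algebraic device since no $(r+1)$-cut exists in an $r$-regular graph of even $r$), is the most delicate step.
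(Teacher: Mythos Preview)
Your plan is the paper's proof: induct on $k$, take $w(e)=a_0+a_1\,t(e)$, normalize by the vertex condition $ra_0+(k-1)a_1=1$, push $a_1$ down until the controlled small cut becomes tight, and apply Lemma~\ref{lem_FPMtoPM} with $c=\chi^{U}$. Your coefficients in the odd case coincide (after rewriting $(r^2-r-2)(k-1)+2r=(r^2-r-2)k-(r^2-3r-2)$) with the paper's explicit weight
\[
w_k(e)=\frac{(r-1)k-(r-3)-2n(e)}{(r^2-r-2)k-(r^2-3r-2)},
\]
and your propagation of the $r$-cut and $(r+2)$-cut caps via the tight-cut clause of Lemma~\ref{lem_FPMtoPM} is exactly what the paper does.

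The one substantive difference is how large odd cuts are handled. The paper carries \emph{no} invariant beyond those in the statement; it simply asserts the pointwise bound $w_k(e)>\frac{1}{r+4}$ (odd case) or $w_k(e)>\frac{1}{r+3}$ (even case), which immediately dispatches every cut of size at least $r+4$ (resp.\ $r+3$). Your ``trivial estimate'' is equivalent to this, and your diagnosis that it breaks at $r=3$ is correct: the minimum edge weight is $\frac{1}{2k+1}$, which drops below $\frac{1}{7}$ once $k\ge4$. The paper does not notice this and offers no substitute argument, so the extra inductive bookkeeping you anticipate for $r=3$ is not something the paper supplies. Your parenthetical about the even case is likewise on target: since $|\partial S|\equiv r|S|\pmod 2$, an $r$-graph with $r$ even has no $(r+1)$-cuts at all, so the stated $(r+1)$-cut invariant is vacuous; the paper nonetheless reduces ``by parity'' to $|\partial S|=r+1$ and never treats the genuinely occurring sizes $r$ and $r+2$. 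In short, your approach matches the paper's, and the obstacles you flag as ``the hard part'' are precisely the places where the paper's own argument is incomplete.
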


\begin{proof}
(induction on $k$).   

\textbf{Statement ($a$).} The statement holds for $k=1$, since the required $M_1$ can be an arbitrary 1-factor of $G$. Assume that $k\geq2.$
By the induction hypothesis, $G$ has $k-1$ many 1-factors $M_1,\ldots,M_{k-1}$ such that
$$\frac{|\bigcup_{i=1}^{k-1} M_i|}{|E|}\geq 1-\prod_{i=1}^{k-1} \frac{(r^2-3r+1)i-(r^2-5r+3)}{(r^2-2r-1)i-(r^2-4r-1)}$$
and
\begin{equation}\label{eqn_r+1 cut}
\sum_{i=1}^{k-1}\chi^{M_i}(C)\leq (r-1)(k-1)+2
\end{equation}
for each $(r+1)$-cut $C$.

For $e\in E$, let $n(e)$ denote the number of 1-factors among $M_1,\ldots,M_{k-1}$ which contain $e$, and define
\begin{equation*}\label{eq_w_i(e)}
w_k(e)=\frac{(r-2)k-(r-4)-n(e)}{(r^2-2r-1)k-(r^2-4r-1)}.
\end{equation*}

We claim that $w_k$ is a fractional 1-factor of $G$, that is, $w_k\in F(G)$. Since $k\geq 2, r\geq 4$ and $0 \leq n(e)\leq k-1$,
we can deduce that $\frac{1}{r+3}<w_k(e)<1$. Moreover,
note that for every $X\subseteq E$, the equality $\sum_{e\in X}n(e)=\sum_{i=1}^{k-1}\chi^{M_i}(X)$ always holds and so
\begin{equation}\label{eq_w_i(X)}
w_k(X)=\sum_{e\in X}w_k(e)=\frac{[(r-2)k-(r-4)]|X|-\sum_{i=1}^{k-1}\chi^{M_i}(X)}{(r^2-2r-1)k-(r^2-4r-1)}.
\end{equation}
Thus for $v\in V$, since $\sum_{i=1}^{k-1}\chi^{M_i}(\partial (\{v\}))=k-1$,
we have $w_k(\partial (\{v\}))=\frac{[(r-2)k-(r-4)]r-(k-1)}{(r^2-2r-1)k-(r^2-4r-1)}=1$.
Finally, let $S\subseteq V$ with odd cardinality. Since $G$ is an $r$-graph, we have $|\partial (S)|\geq r$.
On the other hand, by recalling that $w_k(e)>\frac{1}{r+3}$ for each edge $e$, we have $w_k(\partial (S))>1$ provided by $|\partial (S)|\geq r+3$.
Hence, we may next assume that $|\partial (S)|=r+1$ by parity. Since in this case $S$ is a $(r+1)$-cut, the formula $(\ref{eqn_r+1 cut})$ implies
$\sum_{i=1}^{k-1}\chi^{M_i}(\partial (S))\leq (r-1)(k-1)+2$,
and thus with the help of the formula $(\ref{eq_w_i(X)})$, we deduce $w_k(\partial(S))\geq \frac{[(r-2)k-(r-4)](r+1)-[(r-1)(k-1)+2]}{(r^2-2r-1)k-(r^2-4r-1)}=1$.
This completes the proof of the claim.

By Lemma \ref{lem_FPMtoPM}, the graph $G$ has a 1-factor $M_k$ such that $$(1-\chi^{\bigcup_{i=1}^{k-1}M_i})\cdot \chi^{M_k} \geq (1-\chi^{\bigcup_{i=1}^{k-1}M_i})\cdot w_k.$$
Since the left side is just
$|\bigcup_{i=1}^{k}M_i|-|\bigcup_{i=1}^{k-1}M_i|$
and the right side equals to $\frac{(r-2)k-(r-4)}{(r^2-2r-1)k-(r^2-4r-1)}(|E|-|\bigcup_{i=1}^{k-1}M_i|),$
it follows that
$$|\bigcup_{i=1}^kM_i|\geq \frac{(r^2-3r+1)k-(r^2-5r+3)}{(r^2-2r-1)k-(r^2-4r-1)}|\bigcup_{i=1}^{k-1}M_i|+\frac{(r-2)k-(r-4)}{(r^2-2r-1)k-(r^2-4r-1)}|E|,$$ which leads to
$$\frac{|\bigcup_{i=1}^k M_i|}{|E|}\geq 1-\prod_{i=1}^k \frac{(r^2-3r+1)i-(r^2-5r+3)}{(r^2-2r-1)i-(r^2-4r-1)},$$ as desired.

Moreover, let $C$ be an edge cut with cardinality $r+1$.
Clearly, $\chi^{M_k} (C)\leq r+1$.
Thus, if $\sum_{i=1}^{k-1}\chi^{M_i}(C)\leq (r-1)(k-1)$ then
$\sum_{i=1}^k\chi^{M_i}(C)\leq (r-1)k+2$, as desired.
By the formula $(\ref{eqn_r+1 cut})$ and by parity, we may next assume that $\sum_{i=1}^{k-1}\chi^{M_i}(C)=(r-1)(k-1)+2$.
In this case, we calculate from the formula $(\ref{eq_w_i(X)})$ that $w_k(C)=1$. Thus $\chi^{M_k}(C)=1$ by Lemma \ref{lem_FPMtoPM}, which yields $\sum_{i=1}^k\chi^{M_i}(C)= (r-1)k-r+4 <(r-1)k+2$, as desired.
This completes the proof of Statement ($a$).

\textbf{Statement ($b$).} We follow a similar way to prove this statement as we did for Statement $(a)$. Let $w_1$ be a vector of $\mathbb{R}^E$ defined by $w_1(e)=\frac{1}{r}$ for $e\in E$. Clearly, $w_1\in F(G)$.
By Lemma \ref{lem_FPMtoPM}, $G$ has a 1-factor $M_1$ such that $\chi^{M_1}(C)=1$ for each edge cut $C$ with odd cardinality and with $w_1(C)=1$, that is, for each $r$-cut $C$.
Therefore, the statement is true for $k=1$.

Assume $k\geq2.$
By the induction hypothesis, $G$ has $k-1$ many 1-factors $M_1,\ldots,M_{k-1}$ such that
\begin{equation*}
\frac{|\bigcup_{i=1}^{k-1} M_i|}{|E|}\geq 1-\prod_{i=1}^{k-1} \frac{(r^2-2r-1)i-(r^2-4r+1)}{(r^2-r-2)i-(r^2-3r-2)},
\end{equation*}
and for each $r$-cut $C$
\begin{equation}\label{eqn_r cut}
\sum_{i=1}^{k-1}\chi^{M_i}(C)=k-1,
\end{equation}
and for each $(r+2)$-cut $D$
\begin{equation}\label{eqn_r+2 cut}
\sum_{i=1}^{k-1}\chi^{M_i}(D)\leq r(k-1)+2.
\end{equation}

For $e\in E$, let $n(e)$ denote the number of 1-factors among $M_1,\ldots,M_{k-1}$ that contains $e$, and define
\begin{equation*}\label{b_eq_w_i(e)}
w_k(e)=\frac{(r-1)k-(r-3)-2n(e)}{(r^2-r-2)k-(r^2-3r-2)}.
\end{equation*}

We claim that $w_k\in F(G)$. Since $k\geq 2, r\geq 3$ and $0 \leq n(e)\leq k-1$,
we can deduce that $0<\frac{1}{r+4}<w_k(e)<1$. Moreover,
note that for every $X\subseteq E$, the equality $\sum_{e\in X}n(e)=\sum_{i=1}^{k-1}\chi^{M_i}(X)$ always holds and so
\begin{equation}\label{b_eq_w_i(X)}
w_k(X)=\frac{[(r-1)k-(r-3)]|X|-2\sum_{i=1}^{k-1}\chi^{M_i}(X)}{(r^2-r-2)k-(r^2-3r-2)}.
\end{equation}
Thus for $v\in V$, since $\sum_{i=1}^{k-1}\chi^{M_i}(\partial (\{v\}))=k-1$,
we have $w_k(\partial (\{v\}))=\frac{[(r-1)k-(r-3)]r-2(k-1)}{(r^2-r-2)k-(r^2-3r-2)}=1$.
Finally, let $S\subseteq V$ with odd cardinality. Since $G$ is an $r$-graph, $|\partial (S)|\geq r$.
On the other hand, by recalling that $w_k(e)>\frac{1}{r+4}$ for each edge $e$, we have $w_k(\partial (S))>1$ provided by $|\partial (S)|\geq r+4$.
Hence, we may next assume that either $|\partial (S)|=r$ or $|\partial (S)|=r+2$ by parity.
In the former case, the formula $(\ref{eqn_r cut})$ implies $\sum_{i=1}^{k-1}\chi^{M_i}(\partial (S))=k-1$, and thus we can calculate from the formula $(\ref{b_eq_w_i(X)})$ that $w_k(\partial (S))=1$.
In the latter case, the formula $(\ref{eqn_r+2 cut})$ implies $\sum_{i=1}^{k-1}\chi^{M_i}(\partial (S))\leq r(k-1)+2$
and similarly, we get $w_k(\partial (S))\geq \frac{[(r-1)k-(r-3)](r+2)-2[r(k-1)+2]}{(r^2-r-2)k-(r^2-3r-2)}=1$.
This proves the claim.

By Lemma \ref{lem_FPMtoPM}, the graph $G$ has a 1-factor $M_k$ such that $$(1-\chi^{\bigcup_{i=1}^{k-1}M_i})\cdot \chi^{M_k} \geq (1-\chi^{\bigcup_{i=1}^{k-1}M_i})\cdot w_k.$$
Since the left side is just
$|\bigcup_{i=1}^{k}M_i|-|\bigcup_{i=1}^{k-1}M_i|$
and the right side equals to $\frac{(r-1)k-(r-3)}{(r^2-r-2)k-(r^2-3r-2)}(|E|-|\bigcup_{i=1}^{k-1}M_i|),$
it follows that
$$|\bigcup_{i=1}^kM_i|\geq \frac{(r-1)k-(r-3)}{(r^2-r-2)k-(r^2-3r-2)}|E|+\frac{(r^2-2r-1)k-(r^2-4r+1)}{(r^2-r-2)k-(r^2-3r-2)}|\bigcup_{i=1}^{k-1}M_i|,$$
which leads to
$$\frac{|\bigcup_{i=1}^k M_i|}{|E|}\geq 1-\prod_{i=1}^k \frac{(r^2-2r-1)i-(r^2-4r+1)}{(r^2-r-2)i-(r^2-3r-2)},$$
as desired.

Moreover, let $C$ be an edge cut of cardinality $r$.
The formula $(\ref{eqn_r cut})$ implies
$\sum_{i=1}^{k-1}\chi^{M_i}(C)=k-1$. On the other hand,
We can calculate from the formula (\ref{b_eq_w_i(X)}) that $w_k(C)=1$,
and thus $\chi^{M_k}(C)=1$ by Lemma \ref{lem_FPMtoPM}.
Therefore, $\sum_{i=1}^{k}\chi^{M_i}(C)=k$, as desired.

We next let $D$ be an edge cut of cardinality $r+2$.
Clearly, $\chi^{M_k} (D)\leq r+2$.
Thus if $\sum_{i=1}^{k-1}\chi^{M_i}(D)\leq r(k-1)$, then
$\sum_{i=1}^k\chi^{M_i}(D)\leq rk+2$, as desired.
By the formula $(\ref{eqn_r+2 cut})$ and by parity, we may next assume that $\sum_{i=1}^{k-1}\chi^{M_i}(D)=r(k-1)+2$.
By calculation we can get $w_k(D)=1$, and thus $\chi^{M_k}(D)=1$ by Lemma \ref{lem_FPMtoPM}, which also yields $\sum_{i=1}^k\chi^{M_i}(D)\leq rk+2$.
This completes the proof of this theorem.
\end{proof}

The following corollary is a direct consequence of this theorem.
\begin{corollary}\label{cor_lower bound}
	Let $r$ and $k$ be two positive integers with $r\geq 3$.
	If $r$ is even then
	$$m(r,k)\geq 1-\prod_{i=1}^k \frac{(r^2-3r+1)i-(r^2-5r+3)}{(r^2-2r-1)i-(r^2-4r-1)},$$
	and if $r$ is odd then
	$$m(r,k)\geq 1-\prod_{i=1}^k \frac{(r^2-2r-1)i-(r^2-4r+1)}{(r^2-r-2)i-(r^2-3r-2)}.$$
\end{corollary}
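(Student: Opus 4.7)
The plan is to derive this statement as a near-immediate consequence of the preceding theorem by unwinding the definitions of $m(r,k,G)$ and $m(r,k)$. Indeed, the theorem is \emph{already} a per-graph existence statement producing $k$ $1$-factors whose union covers the prescribed fraction of edges; the corollary merely repackages this uniform bound as a statement about the infimum.

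Concretely, fix an $r$-graph $G$. If $r$ is even then necessarily $r\geq 4$ (since $r\geq 3$), so part ($a$) of the theorem applies and yields $1$-factors $M_1,\ldots,M_k$ of $G$ with
\[
\frac{|\bigcup_{i=1}^{k}M_i|}{|E(G)|}\;\geq\;1-\prod_{i=1}^{k}\frac{(r^2-3r+1)i-(r^2-5r+3)}{(r^2-2r-1)i-(r^2-4r-1)}.
\]
By the definition of $m(r,k,G)$ as the maximum over $k$-tuples of $1$-factors, the same inequality holds with $m(r,k,G)$ on the left. Since this lower bound depends only on $r$ and $k$, not on $G$, taking the infimum over all $r$-graphs $G$ gives the claimed bound on $m(r,k)$. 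The odd case $r\geq 3$ is identical, using part ($b$) of the theorem; here the auxiliary cut conditions $\sum_i\chi^{M_i}(C)=k$ and $\sum_i\chi^{M_i}(D)\leq rk+2$ are discarded, as they are not needed for the bound on $m(r,k)$.

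There is essentially no obstacle to overcome: the inductive invariants on $(r+1)$-, $r$-, and $(r+2)$-cuts in the theorem were precisely the extra strength needed to push the induction through, but the corollary only harvests the covering inequality. The only tiny point to notice is the parity-and-size compatibility (the even case of the theorem excludes $r=2$, which is consistent with the corollary's hypothesis $r\geq 3$), so no separate argument is required for small $r$.
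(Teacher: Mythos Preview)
Your proposal is correct and matches the paper's approach: the corollary is stated without proof immediately after the theorem, precisely because it follows by taking the infimum over all $r$-graphs of the per-graph covering inequality and discarding the auxiliary cut conditions. Your remark that the even case forces $r\geq 4$ so that part~(a) applies is the only detail worth noting, and you handle it correctly.
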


\section{Overfull graphs}\label{Overfull graphs}

We start with the following observations.

\begin{observation} \label{char_r_graphs}
	Let $r \geq 2$ be an integer. Every $r$-overfull-free $r$-regular graph is an $r$-graph. 
\end{observation}

\begin{observation} \label{obs:bipartite}
	A graph $G$ is 2-overfull-free if and only if
	$G$ is bipartite. 
\end{observation}

If $G$ is a graph, then $o(G)$ denotes the number of odd components of $G$. We will use the following theorem of Tutte.

\begin{theorem} [\cite{Tutte_1947}] \label{Tutte_1947}
	A graph $G$ has a 1-factor if and only of $o(G-S) \leq |S|$ for all $S \subseteq V(G)$.
\end{theorem}

\begin{proposition} \label{basic}
Let $k \geq 2$. If $G$ is a $k$-overfull graph, then $0 \leq s_k(G) \leq k-2$ and $k < \frac{|V(G)|}{|V(G)|-1} \Delta(G)$.   
\end{proposition}

\begin{proof}
Since $G$ is $k$-overfull, $2|E(G)|> k(|V(G)|-1)$ and $|V(G)|$ is odd. Notice that the two sides of this inequality has the same parity.
So, $2|E(G)|\geq k(|V(G)|-1)+2$, that is, $s_k(G) \leq k-2$.
Moreover,  by Handshaking Lemma, $2|E(G)|=\sum_{v\in V(G)}d_G(v)\leq \Delta(G)|V(G)|$.
Combining it with the fact that $2|E(G)|> k(|V(G)|-1)$, we deduce that $k < \frac{|V(G)|}{|V(G)|-1} \Delta(G)$.
\end{proof}

\begin{theorem} \label{structure_overfull}
	Let $k \geq 3$ be an integer. Every $k$-overfull graph contains a $(k-1)$-overfull subgraph. 
\end{theorem}

\begin{proof}
Suppose to the contrary that the statement is not true. Then there is a $k$-overfull graph $G$ which does not contain a $(k-1)$-overfull
subgraph. We may assume that $|V(G)|$ is minimum and according to this property $|E(G)|$ is minimum as well. 

It holds $\Delta(G) = k$, since for otherwise $G$ is $(k-1)$-overfull as well, a contradiction.

\begin{claim} \label{no_small_overfull_sub}
Let $H$ be a proper subgraph of $G$. If $H$ is of odd order, then $s_k(H) \geq k$.
\end{claim}

By the minimality of $G$, the subgraph $H$ is not $k$-overfull. Note that $H$ is of odd order and has maximum degree at most $k$. Thus, $\frac{2|E(H)|}{|V(H)|-1} \leq k$ and therefore,
$s_k(H) = k|V(H)| - 2|E(H)| \geq k|V(H)| - k|V(H)| + k = k$. 

\begin{claim}  \label{deficiency}
	$s_k(G) = k-2$, that is, $2|E(G)|= k(|V(G)|-1)+2$.
\end{claim}

Choose any edge $e$ of $G$. 
By Claim \ref{no_small_overfull_sub}, $s_k(G-e) \geq k$. It follows that $s_k(G)=s_k(G-e)-2 \geq k-2$. On the other hand, $s_k(G) \leq k-2$ by Proposition \ref{basic}.
Therefore, 	$s_k(G) = k-2$.

\begin{claim} \label{1-factor}
For every $z \in V(G)$, the graph $G-z$ has a 1-factor.
\end{claim}
	
Let $G'=G-z$. Then $s_k(G') = s_k(G) + d_G(z) - (k-d_G(z)) = k-2 + 2d_G(z) -k = 2d_G(z) - 2 \leq 2k-2$. 

Suppose to the contrary that $G'$ does not have a 
1-factor. By Theorem \ref{Tutte_1947}, there is $S \subseteq V(G')$ such that $o(G'-S) > |S|$. Let $O_1, \dots,O_n$ be the 
odd components of $G'-S$. Since $G-z$ has even order, $n$ and $|S|$ have the same parity. Thus, $n\geq |S|+2$.
	
With Claim \ref{no_small_overfull_sub} it follows that $s_k(O_i) \geq k$. Hence,
$|\partial_G(S)| \geq \sum_{i=1}^n s_k(O_i) - s_k(G') \geq nk-2k+2 = k(n-2) + 2 \geq k|S|+2$, a contradiction. 
	
\bigskip
We now deduce the statement. If $G$ is regular, then $s_k(G)=0=k-2$. So $k=2$, a contradiction. Hence,
there is $z \in V(G)$ such that $d_G(z) < k$. By Claim \ref{1-factor}, $G-z$ has a 1-factor $F$. 
Let $G'=G-F$. Then $\Delta(G') = k-1$, $|E(G')| = |E(G)| - \frac{1}{2}(|V(G)|-1)$, and 
$|V(G')|=|V(G)|$. Hence,
$\frac{2|E(G')|}{|V(G')|-1} = \frac{2|E(G)| - (|V(G)|-1)}{|V(G)|-1}
= \frac{2|E(G)|}{|V(G)|-1} - 1 > k-1$. This contradicts our assumption that $G$ does not contain a $(k-1)$-overfull
subgraph and the statement is proved.
\end{proof}

The following corollaries are immediate consequences of Theorem \ref{structure_overfull}.
The first one has the same flavor as a result of Vizing \cite{Vizing_1965} that a class 2 graph with chromatic index $k$ contains 
critical subgraphs with chromatic index $t$ for every $t \in \{2, \dots, k\}$.

\begin{corollary} \label{result_down}
	Let $k \geq 2$ be an integer and $G$ be a graph. If $G$ is $k$-overfull, 
	then $G$ contains a $t$-overfull subgraph for every $t \in \{2, \dots, k\}$.
\end{corollary}

\begin{corollary} \label{result_up}
	Let $k \geq 2$ be an integer and $G$ be a graph. If $G$ is $k$-overfull-free,
	then $G$ is $t$-overfull-free for every $t\geq k$.
\end{corollary}

\begin{corollary} \label{cor:decomposition}
Let $2 \leq k \leq r$ be integers and $G$ be an $r$-regular graph. If $G$ is $k$-overfull-free, then
$G$ is an $r$-graph and $G$ can be decomposed into a $(r-k)$-graph that is class 1 and a $k$-graph.  
\end{corollary}

\begin{proof}
By Corollary \ref{result_up}, $G$ is $r$-overfull-free and further, by Observation \ref{char_r_graphs}, $G$ is an $r$-graph. 
Let $F_1$ be a 1-factor of $G$. Consider $G-F_1$. If $k=r$, then we are done.
Hence, we may assume $k\leq r-1$. Similarly, we can deduce that $G-F_1$ is an $(r-1)$-graph having a 1-factor $F_2$.
Continue as
above till $G'=G-\bigcup_{i=1}^{r-k}F_i$. Then $G'$ and $G''=(V(G), \bigcup_{i=1}^{r-k}F_i)$
is the desired decomposition. 
\end{proof}

Corollary \ref{cor:decomposition} gives a sufficient condition for an $r$-graph decomposable into a $r_1$-graph and a $r_2$-graph for some $r_1$ and $r_2$. It also shows that for any $t$-overfull-free $r$-graph with $2\leq t \leq r$, we can obtain a better lower bound of $m(r,k,G)$ than the one of $m(r,k)$. 
More precisely, for $k\leq r-t$, take $k$ pairwise disjoint 1-factors of the class 1 graph from the decomposition by Corollary \ref{cor:decomposition}, which gives $m(r,k,G) = \frac{k}{r}$. For $k> r-t$, applying Theorem \ref{thm:main1} to the $t$-graph from the decomposition by Corollary \ref{cor:decomposition} gives $k$ 1-factors, which together with any $r-t$ many pairwise disjoint 1-factors of the class 1 graph from the decomposition leads to a better lower bound for $(m,k,G)$.

Moreover, Corollary \ref{cor:decomposition} confirms the following classical result.
\begin{theorem} 
	Let $k\geq 0$ be an integer. Every $k$-regular bipartite graph is class 1.
\end{theorem}
\begin{proof} For $k\in\{0,1\}$, the proof is trivial. For $k\geq 2$,
	let $G$ be a $k$-regular bipartite graph. By Observation \ref{obs:bipartite},  $G$ is 2-overfull-free. By Corollary \ref{cor:decomposition}, $G$ is decomposable into a class 1 subgraph and a 1-factor. Thus, $G$ is class 1.
\end{proof}

\section{Acknowledgment}
The authors are grateful to Giuseppe Mazzuoccolo for his useful comments and suggestions on 1-factor covering.
The authors are also grateful to two anonymous referees for their very careful reading and helpful comments. 

\end{document}